\newtheorem{theorem}{Theorem}[section]
\newtheorem{proposition}[theorem]{Proposition}
\newtheorem{corollary}[theorem]{Corollary}
\newtheorem{lemma}[theorem]{Lemma}
\newtheorem{remark}[theorem]{Remark}
\newtheorem*{theorem*}{Theorem}
\newtheorem*{question*}{Question}
\numberwithin{equation}{section}
\begin{document}

\title{On the reflexivity of $\mathcal{P}_{w}(^{n}E;F)$}
\author{{{Sergio A. Pérez}{\thanks{S.Pérez was supported by CAPES and CNPq, Brazil. }}}\\{\small  IMECC, UNICAMP} \\{\small {Rua S\'{e}rgio Buarque de Holanda, 651, CEP 13083-859, Campinas-SP,
Brazil.}}\\{\small \texttt{Email:Sergio.2060@hotmail.com}}\\\vspace{-0.2cm}}
\date{}
\maketitle

\begin{abstract}

In this paper we prove that if $E$ and $F$ are reflexive Banach spaces and $G$ is a closed linear subspace of  the space $\mathcal{P}_{w}(^{n}E;F)$ of all $n$-homogeneous polynomials from  $E$ to $F$ which  are weakly continuous on bounded sets, then $G$ is either reflexive or non-isomorphic to a dual space. This result generalizes \cite[Theorem 2]{FEDER} and gives the solution to a problem posed by Feder \cite[Problem 1]{FED}.

%Let $\mathcal{P}_{w} (^{n}E; F)$ denote the subspace of all $P\in \mathcal{P}(^{n}E; F)$ which are weakly continuous on bounded sets. We show that if
%$E$ and $F$ be reflexive Banach spaces and $G$ a closed linear subspace of $\mathcal{P}_{w}(^{n}E;F)$. Then $G$ is either reflexive or non-isomorphic to a dual space. This result answer a question put by Feder in \cite{FED}, moreover, it gives a generalization of \cite[Theorem 2]{FEDER}.
{\small \bigskip\noindent\textbf{AMS MSC:}46B10, 46G25 }

{\small \medskip\noindent\textbf{Keywords:} Banach space, linear operator, compact operator, homogeneous polynomial, dual space, reflexive space.}

\end{abstract}

\section{Introduction}

An important result of  Feder \cite{FED} states that if $E$ and $F$ are reflexive Banach spaces such that $F$ or $E^{\prime}$ is a subspace of a Banach space with an unconditional basis, then the space $\mathcal{L}_{K}(E;F)$ of all compact linear operators from $E$ to $F$ is either reflexive or
non-isomorphic to a dual space. In \cite{FEDER}, Feder and Saphar proved that if $E$ and $F$ are reflexive Banach spaces and $G$ is a closed linear
subspace of $\mathcal{L}_{K}(E;F)$ which contains the space $\mathcal{R}(E,F)$ of all finite rank linear operators from $E$ to $F$, then $G$ is either reflexive or non-isomorphic to a dual space. But the following question posed in \cite{FED} remains open:
\begin{question*} Let $E$ and $F$ be reflexive Banach spaces. Is $\mathcal{L}_{K}(E;F)$ either reflexive or
non-isomorphic to a dual space?
\end{question*}
In this paper, we obtain a positive answer for the previous question. In fact, we prove the following more general result:
\begin{theorem*} Let $E$ and $F$ be reflexive Banach spaces and $G$ be a closed linear subspace of $\mathcal{P}_{w}(^{n}E;F)$. Then $G$ is either reflexive or non-isomorphic to a dual space.
\end{theorem*}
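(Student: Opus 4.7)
The plan is to suppose that $G$ is isomorphic to a dual Banach space and to deduce that $G$ must then be reflexive, which yields the dichotomy directly.

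First, since $F$ is reflexive, the standard identifications yield an isometric embedding
\[
G\ \subseteq\ \mathcal{P}(^{n}E;F)\ \cong\ \bigl(\widehat{\otimes}^{n}_{s,\pi}E\,\widehat{\otimes}_{\pi}\,F^{*}\bigr)^{*},
\]
which endows $G$ with a canonical weak$^{*}$ topology. The natural test functionals on $G$ are the evaluations $\phi_{x,y}(P):=\langle P(x),y\rangle$ for $x\in B_{E}$ and $y\in B_{F^{*}}$. Because each $P\in G$ is weak-to-norm continuous on the weakly compact set $B_{E}$ (reflexivity of $E$), the set $P(B_{E})$ is norm-compact in $F$, and together with weak compactness of $B_{F^{*}}$ this gives $\|P\|=\max_{x,y}|\phi_{x,y}(P)|$. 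Hence $\{\phi_{x,y}\}\subseteq B_{G^{*}}$ is a $1$-norming family of norm-attaining functionals.

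Second, by James's reflexivity theorem it suffices to show that every $\psi\in G^{*}$ attains its norm on $B_{G}$. Under the hypothesis $G\cong X^{*}$ the ball $B_{G}$ is weak$^{*}$-compact, so functionals lying in the predual $X\subseteq G^{*}$ attain their norms automatically, and the closed linear span of the $\phi_{x,y}$'s is already $\sigma(G^{*},G)$-dense because these functionals separate the points of $G$. The task is therefore to upgrade this weak$^{*}$ density plus norm-attainment of the generators to norm-attainment of an arbitrary $\psi\in G^{*}$. I would proceed by contradiction: a $\psi\in G^{*}$ failing to attain its norm produces a maximizing sequence $(P_k)\subseteq B_{G}$ which, exploiting weak compactness of $B_{E}$ and $B_{F^{*}}$ together with the weak-to-norm continuity of each $P_k$ on $B_{E}$, must fail to be weakly relatively compact; a diagonal/sliding-hump construction on weakly null sequences in $B_{E}$ and $B_{F^{*}}$ then delivers a subsequence of $(P_k)$ equivalent to the unit-vector basis of $c_{0}$ in $G$. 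The final ingredient is that the presence of such a copy of $c_{0}$ inside the dual space $G\cong X^{*}$ contradicts either the weak$^{*}$ compactness of $B_{G}$ (through a Bessaga--Pełczy\'nski style rigidity argument) or the norm-attainment already established for the generating family $\{\phi_{x,y}\}$.

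The principal obstacle is executing the $c_{0}$-extraction in the polynomial setting without the assumption $\mathcal{R}(E,F)\subseteq G$ that Feder--Saphar use; their argument produces the $c_{0}$-copy directly from finite rank operators, a tool unavailable here. The substitute is to work through the symmetric $n$-linear form $\check P$ associated with each $P\in G$, separating variables in the symmetric projective tensor product $\widehat{\otimes}^{n}_{s,\pi}E$ and isolating coordinates by weakly null nets in $B_{E}$ and in $B_{F^{*}}$. Making this construction quantitatively compatible with the norm of $G^{*}$, so that the resulting basic sequence is genuinely $c_{0}$-equivalent in $G$ and not merely in some weaker topology, is the core technical step and the place where the nonlinearity intrinsic to $n$-homogeneous polynomials demands real care.
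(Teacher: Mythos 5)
Your proposal does not amount to a proof: the step you yourself label ``the core technical step'' --- extracting, from a maximizing sequence of a non-norm-attaining functional, a subsequence equivalent to the unit vector basis of $c_0$ --- is precisely the part of the Feder--Saphar argument that is unavailable without the hypothesis $\mathcal{R}(E,F)\subseteq G$, and you give no construction for it. Moreover, even granting such a subsequence, the intended contradiction fails as stated: a dual Banach space can contain an isomorphic copy of $c_0$ (for instance $\ell_\infty=(\ell_1)^{\prime}$ does), so ``a copy of $c_0$ inside $G\cong X^{\prime}$'' contradicts nothing by itself. Bessaga--Pe\l czy\'nski only yields that a dual space containing $c_0$ contains $\ell_\infty$; Feder--Saphar must additionally show that the $\ell_\infty$-sum of their basic sequence still lies in the space of compact operators, and it is exactly this that has no evident analogue for a general closed subspace $G$ of $\mathcal{P}_w(^nE;F)$. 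A further soft spot: noting that the evaluation functionals $\phi_{x,y}$ are norm-attaining and norming buys nothing toward James's criterion, since norm-attaining functionals are norm-dense in the dual of \emph{any} Banach space (Bishop--Phelps), whereas James requires \emph{every} functional to attain its norm.

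The paper's proof takes a completely different and much shorter route, and the idea it rests on is the one your sketch is missing: by Lemma \ref{thm:(Teorema 1)} (a special case of a result of Gonz\'alez and Guti\'errez), for reflexive $E,F$ a sequence in $G\subseteq\mathcal{P}_w(^nE;F)$ converges weakly if and only if it converges pointwise against the evaluation functionals $\psi_{y^{\prime},x}$. Given $(P_m)\subseteq B_G$, one extracts a subsequence with $J_G(P_{m_k})\to\theta$ in the $\sigma(G^{\prime\prime},G^{\prime})$-topology, uses the Dixmier-type projection $\pi=J_G\circ\varphi\circ J_X^{\prime}\circ(\varphi^{\prime\prime})^{-1}$ furnished by the assumed isomorphism $\varphi:X^{\prime}\to G$ to write $\theta=J_G(Q)+\eta$ with $\eta\in\ker\pi$, checks that $\eta$ vanishes on each $\psi_{y^{\prime},x}$, and concludes that $P_{m_k}\to Q$ weakly; reflexivity then follows from Eberlein--\v{S}mulian. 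No James's theorem, no basic sequences. To salvage your route you would have to carry out the entire $c_0$-construction and repair the final contradiction; as written, the argument has a hole exactly where the difficulty lies.
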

The answer of the aforementioned question is obtained just considering $n=1$ in the previous theorem. As other consequences of this result we also obtain two conditions, one that ensures that $\mathcal{P}_{w}(^{n}E;F)$ is non-isomorphic to a dual space and other such that $\mathcal{P}_{w}(^{n}E;E)$ is non-isomorphic to a dual space (see Corollaries \ref{thm:(Teorema 15)} and \ref{thm:(Teorema 16)} ). We also obtain a generalization of Boyd and Ryan \cite[Theorem 21]{BOYD}.

Throughout this paper $E$ and $F$ denote Banach spaces over $ \mathbb{K}$, where $ \mathbb{K}$ is $ \mathbb{R}$ or  $\mathbb{C}$,  $E^{\prime}$
denotes the dual of $E$ and $B_{E}=\{x\in E: \|x\|\leq1\}$. We say that $E$ is a \emph{dual space} if there exists a Banach space $X$ such that $X^{\prime}=E$. Let $J_E:E\rightarrow E^{\prime\prime}$ denotes the canonical injection from $E$ into $E^{\prime\prime}$. The space of all bounded linear operators from $E$ to $F$ is represented by $ \mathcal{L}(E;F)$ and $\mathcal{P}(^{n}E; F)$ denotes the Banach space of all continuous $n$-homogeneous polynomials from $E$ into $F$ with its usual sup norm. We omit $F$ when $F = \mathbb{K}$. Let
$\mathcal{P}_{f}(^{n}E; F)$ denotes the subspace of $\mathcal{P}(^{n}E; F)$ generated by all polynomials of the form $P(x)= (\phi(x))^{n} b$,
with $\phi\in E^{\prime}$ and $b\in F$. We denote by $\mathcal{P}_{A}(^{n}E; F)$ the closure of $\mathcal{P}_{f}(^{n}E; F)$ with the norm topology and $\mathcal{P}_{w}(^{n}E; F)$ denotes the subspace of $\mathcal{P}(^{n}E; F)$ formed by all $P$
which are \emph{weakly continuous on bounded sets}, that is the restriction $P|_{B}: B \rightarrow F $ is continuous for each bounded set $B\subset E$,
when $B$ and $F$ are endowed with the weak topology and the norm topology, respectively. The subspace $\mathcal{P}_{K}(^{n}E; F)$ of $\mathcal{P}(^{n}E; F)$  is formed by all polynomials that send bounded sets onto relatively compact sets.
It is well-known that
$$ P_{w}(^{n}E; F)\subset \mathcal{P}_{K}(^{n}E; F)\subset \mathcal{P}(^{n}E; F)$$
 and $P_{w}(^{n}E; F)=\mathcal{L}_{K}(E;F)$ when $n=1$.
For $T\in\mathcal{L}(E;F)$ we denote by $T^{\prime}\in\mathcal{L}(F^{\prime};E^{\prime})$ the adjoint operator of $T$.
Finally, let us recall that $E$ has the \emph{compact approximation property} (CAP in short) if given a compact set $C \subset E$
and $\epsilon > 0$, there is $T\in \mathcal{L_{K}}(E;E)$ such that $\|Tx-x\| < \epsilon$
for every $x\in C$.

\smallskip

\section{The main result}

To prove the main result, we need the following lemma, which is a special case of \cite[Corollary 5]{Gon}.

\begin{lemma}\label{thm:(Teorema 1)}Let $E$ and $F$ be reflexive Banach spaces and $G$ be a closed linear subspace of $\mathcal{P}_{w}(^{n}E;F)$. Let $P_{m}, P\in G$ for each $m\in\mathbb{N}$. Then $\lim\limits_{m \rightarrow \infty}P_{m}=P$ weakly in $G$ if and only if $\lim\limits_{m \rightarrow \infty}y^{\prime}(P_{m}(x))=y^{\prime}(P(x))$ for every $x\in E$ and every $y^{\prime}\in F^{\prime}$.
\end{lemma}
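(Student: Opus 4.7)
The plan is to verify the two implications separately. The forward direction ($\Rightarrow$) is immediate from the definition of the weak topology on $G$: for each fixed $x\in E$ and $y^{\prime}\in F^{\prime}$ the evaluation map $\Lambda_{x,y^{\prime}}:G\to\mathbb{K}$ defined by $\Lambda_{x,y^{\prime}}(Q)=y^{\prime}(Q(x))$ is linear and continuous, since $|\Lambda_{x,y^{\prime}}(Q)|\le\|y^{\prime}\|\,\|x\|^{n}\,\|Q\|$. Hence weak convergence $P_{m}\to P$ in $G$ immediately yields $y^{\prime}(P_{m}(x))\to y^{\prime}(P(x))$ for every $x\in E$ and every $y^{\prime}\in F^{\prime}$.

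For the reverse direction ($\Leftarrow$), the first step is to deduce the uniform bound $M:=\sup_{m}\|P_{m}\|<\infty$. For each fixed $x\in E$ the scalar sequence $\{y^{\prime}(P_{m}(x))\}_{m}$ converges for every $y^{\prime}\in F^{\prime}$, so $\{P_{m}(x)\}$ is weakly bounded and hence norm bounded in $F$; the polynomial form of the Banach--Steinhaus theorem then produces the uniform bound $M$. The remaining task is to promote pointwise convergence on the family of evaluation functionals to weak convergence in $G$.

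The heart of the proof---and what I expect to be the main obstacle---is the norm-density in $G^{\prime}$ of the linear span $\Delta$ of $\{\Lambda_{x,y^{\prime}}:x\in E,\,y^{\prime}\in F^{\prime}\}$. Once such density is available, a routine $\epsilon/3$ argument combining the uniform bound $M$ with the hypothesis $\psi(P_{m})\to\psi(P)$ for every $\psi\in\Delta$ upgrades pointwise convergence on $\Delta$ to $\phi(P_{m})\to\phi(P)$ for arbitrary $\phi\in G^{\prime}$, which is exactly weak convergence in $G$. Establishing the density of $\Delta$ is precisely where the hypotheses are essential: reflexivity of $E$ makes $B_{E}$ weakly compact, reflexivity of $F$ makes bounded subsets of $F$ relatively weakly compact, and the defining property of $\mathcal{P}_{w}(^{n}E;F)$ turns weak convergence on $B_{E}$ into norm convergence in $F$. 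Together these ingredients allow one to represent $\mathcal{P}_{w}(^{n}E;F)^{\prime}$ via a symmetric tensor product in which the functionals $\Lambda_{x,y^{\prime}}$ correspond to elementary tensors whose linear span is norm-dense; a Hahn--Banach restriction then transfers this density to $G^{\prime}$. This duality identification is the technical core of \cite[Corollary 5]{Gon}, from which the lemma follows as a special case.
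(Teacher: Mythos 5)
Your proposal is consistent with the paper, which offers no proof of its own and simply quotes the lemma as a special case of \cite[Corollary 5]{Gon}; your forward direction and your reduction of the converse to uniform boundedness (via the polynomial Banach--Steinhaus theorem) plus norm-density of the span of the functionals $\Lambda_{x,y^{\prime}}$ in $G^{\prime}$ are correct, and you rightly identify that this density is exactly the content of the cited result. So the approach is essentially the same as the paper's, with the technical core deferred to the same reference.
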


\begin{theorem}\label{thm:(Teorema 13)}
Let $E$ and $F$ be reflexive Banach spaces and $G$ be a closed linear subspace of $\mathcal{P}_{w}(^{n}E;F)$. Then $G$ is either reflexive or non-isomorphic to a dual space.
\end{theorem}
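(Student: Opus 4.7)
My plan is to prove the contrapositive: assume $G$ is isomorphic to a dual space, say $G \cong W^{\prime}$, and deduce that $G$ is reflexive. By the Eberlein--\v{S}mulian theorem it is enough to verify that $B_G$ is weakly sequentially compact.

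I would first construct a \emph{concrete} predual inside $G^{\prime}$. Let $V$ denote the closed linear span in $G^{\prime}$ of the evaluation functionals $\delta_{x,y^{\prime}} : P \mapsto y^{\prime}(P(x))$ for $x\in E$ and $y^{\prime}\in F^{\prime}$. Since $\|P\| = \sup\{|y^{\prime}(P(x))| : \|x\|\le 1,\ \|y^{\prime}\|\le 1\}$, the subspace $V$ is $1$-norming, so the canonical evaluation map $q : G \to V^{\prime}$ is an isometric embedding. Lemma \ref{thm:(Teorema 1)} then reads as the statement that on $G$, weak sequential convergence coincides with $\sigma(G,V)$-convergence, and this is the bridge between the abstract weak topology of $G$ and the concrete pointwise-weak topology inherited from $V^{\prime}$.

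Given a bounded sequence $(P_m) \subset G$, I would use reflexivity of $E$ and $F$ to extract a subsequence $(P_{m_k})$ along which $y^{\prime}(P_{m_k}(x))$ converges for every $x \in E$ and every $y^{\prime}\in F^{\prime}$: reflexivity of $F$ lets me take weak limits of the bounded sequences $(P_{m_k}(x)) \subset F$ for each $x$, and a diagonal argument on a countable total subset of a separable subspace of $E$, combined with the weak compactness of $B_E$ afforded by reflexivity of $E$, lets me promote this to every $x\in E$. The pointwise-weak limit defines a candidate $P : E \to F$, which one checks is an $n$-homogeneous polynomial in $\mathcal{P}_w(^n E; F)$ via standard polarization identities and weak continuity arguments. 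At this stage the hypothesis $G \cong W^{\prime}$ enters: the Banach--Alaoglu theorem provides a $\sigma(G,W)$-cluster point $\widetilde P \in B_G$ of $(P_{m_k})$, and one identifies $\widetilde P$ with $P$ by checking that both produce the same scalars when paired against the $1$-norming family $\{\delta_{x,y^{\prime}}\}$. Consequently $P \in G$, and Lemma \ref{thm:(Teorema 1)} then promotes the pointwise-weak convergence of $(P_{m_k})$ to weak convergence $P_{m_k} \to P$ in $G$, as required.

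The main technical obstacle is precisely this last identification $\widetilde P = P$, which amounts to showing that each evaluation $\delta_{x,y^{\prime}}$ is $\sigma(G,W)$-continuous on $B_G$, i.e.\ is encoded in the hypothetical predual $W$. Reconciling the abstract weak-$\ast$ topology from $G\cong W^{\prime}$ with the concrete pointwise topology $\sigma(G,V)$ is where the double reflexivity of $E$ and $F$ enters decisively: without reflexivity of $F$ the pointwise limits could leak out of $F$ into $F^{\prime\prime}$, and without reflexivity of $E$ the weakly compact ball $B_E$ needed for the diagonal extraction would not be available. I expect this step to require combining Lemma \ref{thm:(Teorema 1)} with a careful passage from a subnet (delivered by Alaoglu) to the subsequence already in hand.
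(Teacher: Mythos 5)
Your outline correctly locates the crux of the theorem, but it does not prove it, and one of the steps you treat as routine is actually false. First, the claim that the pointwise--weak limit $P$ of the extracted subsequence ``is an $n$-homogeneous polynomial in $\mathcal{P}_w(^nE;F)$ via standard polarization identities and weak continuity arguments'' cannot be verified at that stage, because it is not true in general: a bounded sequence in $\mathcal{P}_w(^nE;F)$ can converge pointwise to a polynomial that is not weakly continuous on bounded sets. For example, $P_m(x)=\sum_{j=1}^{m}x_j^{2}$ lies in $\mathcal{P}_f(^{2}\ell_2)\subset\mathcal{P}_w(^{2}\ell_2)$ with $\|P_m\|=1$, and $P_m$ converges pointwise to $P(x)=\sum_{j}x_j^{2}$, which is bounded on $B_{\ell_2}$ but satisfies $P(e_j)=1\not\to 0=P(0)$ even though $e_j\to 0$ weakly; so $P\notin\mathcal{P}_w(^{2}\ell_2)$. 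Membership of the limit in $G$ (hence in $\mathcal{P}_w$) can only come out of the dual-space hypothesis; it cannot be established first and then merely ``matched'' against a weak-$*$ cluster point. Second, the step you yourself flag as ``the main technical obstacle'' --- that each evaluation $\delta_{x,y^{\prime}}$ is $\sigma(G,W)$-continuous, equivalently that $\varphi^{\prime}(\delta_{x,y^{\prime}})$ lands in $J_W(W)\subset W^{\prime\prime}$ --- is the entire content of the theorem, and your proposal offers no argument for it beyond the expectation that it follows from Lemma \ref{thm:(Teorema 1)} together with a subnet-to-subsequence passage. It does not: Lemma \ref{thm:(Teorema 1)} concerns the weak topology $\sigma(G,G^{\prime})$, not the weak-$*$ topology of an unknown predual, and for a general functional in $G^{\prime}$ there is no reason its image under $\varphi^{\prime}$ should lie in $J_W(W)$.

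For comparison, the paper attacks exactly this point by a different device. It takes a $\sigma(G^{\prime\prime},G^{\prime})$-cluster point $\theta$ of $(J_G(P_{m_k}))$, uses the Dixmier-type projection $\pi=J_G\circ\varphi\circ J_X^{\prime}\circ(\varphi^{\prime\prime})^{-1}$ of $G^{\prime\prime}$ onto $J_G(G)$ (available precisely because $G$ is assumed to be a dual) to write $\theta=J_G(Q)+\eta$ with $\eta\in\ker\pi$, and then argues that $\eta(\psi_{y^{\prime},x})=0$ for every evaluation functional, so that $Q\in G$ already reproduces all the limits $\lim_k y^{\prime}(P_{m_k}(x))$ and Lemma \ref{thm:(Teorema 1)} finishes the proof. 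Whatever the fine details of that computation, it is an actual argument aimed at the hole you leave open, and nothing in your outline substitutes for it. A smaller but genuine further problem is the extraction step: when $E$ is not separable, a diagonal argument over ``a countable total subset of a separable subspace'' does not yield convergence of $y^{\prime}(P_{m_k}(x))$ at every $x\in E$, and weak compactness of $B_E$ does not promote it; what is actually needed there is norm-density together with the uniform equicontinuity of the bounded family $(P_m)$ on $B_E$, plus a reduction to the separable case, none of which appears in the proposal.
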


\begin{proof}
Suppose that $G$ is isomorphic to the conjugate of a Banach space $X$. Let $\varphi:X^{\prime}\rightarrow G$ be an isomorphism. To show that $G$ is reflexive, we need to prove that every sequence in $B_{G}$ has a weakly convergent subsequence. Consider $(P_{m})$ in $B_{G}$. Since $B_{G^{\prime\prime}}$ is
$\sigma(G^{\prime\prime},G^{\prime})$-compact there exist a subsequence $(J_G(P_{m_{k}}))$ of $(J_G(P_{m}))$ and $\theta \in G^{\prime\prime}$ such that $\lim\limits_{k \rightarrow \infty}J_G(P_{m_{k}})=\theta$ in the $\sigma(G^{\prime\prime},G^{\prime})$-topology.
For every $y^{\prime}\in F^{\prime}$ and $x\in E$, consider the linear functional $$\psi_{y^{\prime},x}: P\in G\rightarrow y^{\prime}(P(x))\in \mathbb{K}.$$
Since $\psi_{y^{\prime},x}\in G^{\prime}$ we have that $$\lim\limits_{k \rightarrow \infty}\big<J_G(P_{m_{k}}),\psi_{y^{\prime},x}\big>=\lim\limits_{k \rightarrow \infty}y^{\prime}(P_{m_{k}}(x))=\theta(\psi_{y^{\prime},x})$$ for every $y^{\prime}\in F^{\prime}$ and $x\in E$. We want to prove that
$$\pi:\phi\in G^{\prime\prime}\rightarrow J_G\circ\varphi\circ J_X^{\prime}\circ (\varphi^{\prime\prime})^{-1}(\phi)\in J_G(G)$$
is a projection.Note that
\begin{align*}
\big<J_X^{\prime}\circ(\varphi^{\prime\prime})^{-1}(J_G(P)),z\big>=\big<(\varphi^{\prime\prime})^{-1}(J_G(P)),J_X(z)\big>=\big<(\varphi^{-1})^{\prime\prime}(J_G(P)),J_X(z)\big>=
\big<J_G(P),(\varphi^{-1})^{\prime}(J_X(z))\big>\\=\big<(\varphi^{-1})^{\prime}(J_X(z)),P\big>=\big<J_X(z),\varphi^{-1}(P)\big>=\big<\varphi^{-1}(P),z\big>
\end{align*}
for each $P\in G$ and $z\in X$. This implies that
$$J_X^{\prime}\circ (\varphi^{\prime\prime})^{-1}(J_G(P))=\varphi^{-1}(P)$$ and then
$\pi\circ J_G=J_G$. Thus $\pi$ is a projection and so $$G^{\prime\prime}=J_G(G)\oplus \ker(\pi).$$  Let $Q\in G$  and $\eta\in \ker(\pi)$ such that $\theta=J_G(Q)+\eta$. Since $\eta\in \ker(\pi)$ and $J_G\circ \varphi$ is injective, we have $J_{X}^{\prime}\circ(\varphi^{\prime\prime})^{-1}(\eta)=0$. On the other hand,

$$\eta(\psi_{y^{\prime},x})=\big<(\varphi^{\prime\prime})^{-1}(\eta),\varphi^{\prime}(\psi_{y^{\prime},x})\big>
=\big<J_{X}^{\prime\prime}(\varphi^{\prime}(\psi_{y^{\prime},x}),(\varphi^{\prime\prime})^{-1}(\eta)\big>
=\big<\varphi^{\prime}(\psi_{y^{\prime},x}),J_{X}^{\prime}\circ (\varphi^{\prime\prime})^{-1}(\eta)\big>=0.$$

Hence $$\lim_{k\rightarrow\infty}y^{\prime}(P_{m_{k}}(x))=\theta(\psi_{y^{\prime},x})=\big<J_{G}(Q),\psi_{y^{\prime},x}\big>+
\big<\eta,\psi_{y^{\prime},x}\big>=y^{\prime}(Q(x)),$$ for every $y^{\prime}\in F^{\prime}$ and $x\in E$. By Lemma \ref{thm:(Teorema 1)} it follows that $\lim\limits_{k \rightarrow \infty}P_{m_{k}}=Q$ weakly in $G$. This completes the proof.

\end{proof}

The next result is just Theorem \ref{thm:(Teorema 13)} with $n=1$. This is also the affirmative answer of \cite[Problem 1]{FED} and consequently a generalization of \cite[Theorem 2]{FEDER} and \cite[Theorem 5]{FED}.

\begin{corollary}\label{thm:(Teorema 14)}
Let $E$ and $F$ be reflexive Banach spaces and $G$ be a closed linear subspace of $\mathcal{L}_{K}(E;F)$. Then $G$ is reflexive or non-isomorphic to a dual space.
\end{corollary}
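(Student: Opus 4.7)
Assuming $G$ is isomorphic to the dual of a Banach space $X$ via an isomorphism $\varphi:X'\to G$, my goal is to show that $G$ is reflexive; by the Eberlein--\v{S}mulian theorem this reduces to showing every sequence $(P_m)\subset B_G$ has a weakly convergent subsequence. First I would pass to $G''$: since $B_{G''}$ is $\sigma(G'',G')$-compact, extract $(P_{m_k})$ and $\theta\in G''$ with $J_G(P_{m_k})\to\theta$ in the weak-$*$ topology. For each $x\in E$ and $y'\in F'$, the evaluation $\psi_{y',x}(P):=y'(P(x))$ is a bounded linear functional on $G$ (with $\|\psi_{y',x}\|\le\|y'\|\|x\|^n$), so automatically $y'(P_{m_k}(x))\to\theta(\psi_{y',x})$. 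By Lemma \ref{thm:(Teorema 1)}, the proof finishes if I can produce $Q\in G$ such that $\theta(\psi_{y',x})=y'(Q(x))$ for every such $x$ and $y'$.

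To find $Q$, I would exploit the fact that every dual space is canonically $1$-complemented in its bidual via the projection $J_X':X'''\to X'$. Conjugating this projection by $\varphi$ suggests defining
\[
\pi:=J_G\circ\varphi\circ J_X'\circ(\varphi'')^{-1}:G''\to J_G(G).
\]
A short bracket computation using $(\varphi^{-1})''=(\varphi'')^{-1}$, the general adjoint rule $\langle T'a,b\rangle=\langle a,Tb\rangle$, and the identity $J_X'\circ J_{X'}=\mathrm{Id}_{X'}$, should show $\pi\circ J_G=J_G$, so $\pi$ is a bounded projection of $G''$ onto $J_G(G)$. Consequently $G''=J_G(G)\oplus\ker\pi$, and I can write $\theta=J_G(Q)+\eta$ with $Q\in G$ and $\eta\in\ker\pi$.

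It then remains to verify $\eta(\psi_{y',x})=0$, for then $\theta(\psi_{y',x})=J_G(Q)(\psi_{y',x})=y'(Q(x))$ and Lemma \ref{thm:(Teorema 1)} delivers the desired weak convergence $P_{m_k}\to Q$. Since $J_G\circ\varphi$ is injective and $\pi(\eta)=0$, one first extracts $J_X'\circ(\varphi'')^{-1}(\eta)=0$. Rewriting
\[
\eta(\psi_{y',x})=\bigl\langle(\varphi'')^{-1}(\eta),\,\varphi'(\psi_{y',x})\bigr\rangle
\]
and shifting $J_X$ across the pairing -- the key is to recognise $\varphi'(\psi_{y',x})$ as living in the canonical image $J_{X''}(X'')$ inside $X''''$ -- reduces the right-hand side to a pairing against $J_X'\circ(\varphi'')^{-1}(\eta)=0$, as required.

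The main obstacle is precisely this last bracket juggling. It is easy to lose track of which canonical embedding is in play at each level of the tower $X\hookrightarrow X''\hookrightarrow X''''$, and to confuse $\varphi''$ with $(\varphi'')^{-1}=(\varphi^{-1})''$ or $J_X$ with $J_{X'}$; the identities are all ``diagrammatically obvious'' but any index slip breaks the computation. Conceptually, once the projection $\pi$ is in hand the argument is a forced pairing-by-pairing verification, and the reflexivity of $E$ and $F$ is invoked only indirectly, through Lemma \ref{thm:(Teorema 1)}.
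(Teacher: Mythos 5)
Your overall strategy is the same as the paper's: the paper deduces this corollary from Theorem \ref{thm:(Teorema 13)} with $n=1$ (using $\mathcal{P}_{w}(^{1}E;F)=\mathcal{L}_{K}(E;F)$), and you simply re-run the proof of that theorem. The projection $\pi=J_G\circ\varphi\circ J_X'\circ(\varphi'')^{-1}$, the decomposition $\theta=J_G(Q)+\eta$, and the appeal to Lemma \ref{thm:(Teorema 1)} all match the paper. The problem is the step you explicitly defer as ``bracket juggling'': showing $\eta(\psi_{y',x})=0$ is not a routine pairing verification, and as you describe it, it does not go through.

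Concretely, set $\xi=(\varphi'')^{-1}(\eta)\in X'''$. What you have is $J_X'(\xi)=0$, i.e.\ $\xi$ annihilates $J_X(X)\subset X''$; what you need is $\xi\bigl(\varphi'(\psi_{y',x})\bigr)=0$, where $\varphi'(\psi_{y',x})$ lives in $X''$ (not in $X''''$, as your parenthetical about $J_{X''}(X'')$ suggests). Since $J_X(X)$ is norm closed, the set of $w\in X''$ killed by every element of $(J_X(X))^{\perp}$ is exactly $J_X(X)$; hence the assertion ``$\eta(\psi_{y',x})=0$ for all $\eta\in\ker\pi$'' is \emph{equivalent} to $\varphi'(\psi_{y',x})\in J_X(X)$, i.e.\ to the $\sigma(X',X)$-continuity of the evaluation functional $P\mapsto y'(P(x))$ under the identification $G\cong X'$. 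Nothing in the hypotheses supplies this: $X$ is an arbitrary predual and $\varphi$ an arbitrary isomorphism. The identity one is tempted to use when ``shifting $J_X$ across the pairing,'' namely $\langle\xi,w\rangle=\langle w,J_X'(\xi)\rangle$ for $w\in X''$ and $\xi\in X'''$, is false in general (take $X=c_0$, $w=(1,1,\dots)\in\ell_\infty$ and $\xi$ a Banach limit: the left side is $1$, the right side is $0$); it holds only when $w\in J_X(X)$ or $\xi\in J_{X'}(X')$. This is exactly the second equality in the displayed chain of the paper's proof of Theorem \ref{thm:(Teorema 13)}, so the difficulty is inherited rather than introduced by you, but it is a genuine gap: the weak-$*$ continuity of the point evaluations on an arbitrary predual is the real content of Feder's problem, and neither your proposal nor the bracket manipulation you sketch establishes it.
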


%\begin{remark}
%Corollary \ref{thm:(Teorema 14)} gives a generalization of \cite[Theorem 2]{FEDER} and \cite[Theorem 5]{FED},  resolving thus the problem $1$ put by Feder in \cite{FED}.
%\end{remark}

\begin{remark}
Note that Theorem \ref{thm:(Teorema 13)} does not work for $\mathcal{P}_{K}(^{n}E;F)$ instead of $\mathcal{P}_{w}(^{n}E;F)$. In fact, $\mathcal{P}_{K}(^{2}\ell_{2})=\mathcal{P}(^{2}\ell_{2})=\mathcal{L}(\ell_{2};\ell_{2} )$ is a dual space that is not reflexive.
\end{remark}

The next result is a generalization of \cite[Theorem 21]{BOYD}.

\begin{corollary}\label{thm:(Teorema 1000000)}
Let $E$ be a reflexive Banach space. Then $\mathcal{P}_{A}(^{n}E)$ is either reflexive or non-isomorphic to a dual space for every $n\in\mathbb{N}$.
\end{corollary}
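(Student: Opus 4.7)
The plan is to deduce the corollary as an immediate special case of Theorem \ref{thm:(Teorema 13)} by taking $F=\mathbb{K}$ and $G=\mathcal{P}_{A}(^{n}E)$. Since $\mathbb{K}$ is one-dimensional and hence reflexive, and $E$ is reflexive by hypothesis, the only substantive thing to check is that $\mathcal{P}_{A}(^{n}E)$ sits inside $\mathcal{P}_{w}(^{n}E)$ as a closed linear subspace. Once this inclusion is verified, Theorem \ref{thm:(Teorema 13)} applies verbatim and gives the desired dichotomy.

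To establish the inclusion, I would first show $\mathcal{P}_{f}(^{n}E)\subseteq\mathcal{P}_{w}(^{n}E)$. For a generator $P(x)=\phi(x)^{n}$ with $\phi\in E^{\prime}$, if $(x_\alpha)$ is a bounded net in $E$ converging weakly to $x$, then $\phi(x_\alpha)\to\phi(x)$ in $\mathbb{K}$, so $\phi(x_\alpha)^{n}\to\phi(x)^{n}$; hence $P$ is weakly continuous on bounded sets. By linearity, the same holds for every element of $\mathcal{P}_{f}(^{n}E)$.

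Next, I would invoke the standard fact that $\mathcal{P}_{w}(^{n}E;F)$ is norm-closed in $\mathcal{P}(^{n}E;F)$: if $P_m \to P$ uniformly on $B_E$ and each $P_m$ is weakly continuous on bounded sets, then for any bounded set $B$ the restrictions $P_m|_B$ converge uniformly to $P|_B$ in the norm topology of $F$, while each $P_m|_B$ is weak-to-norm continuous; a uniform limit of continuous maps is continuous, so $P|_B$ is weak-to-norm continuous as well. Combining these two facts yields $\mathcal{P}_{A}(^{n}E)=\overline{\mathcal{P}_{f}(^{n}E)}\subseteq\mathcal{P}_{w}(^{n}E)$, and $\mathcal{P}_{A}(^{n}E)$ is closed in $\mathcal{P}_{w}(^{n}E)$ by definition.

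There is no real obstacle here: all the substance is in Theorem \ref{thm:(Teorema 13)}, and the corollary is essentially a packaging step, consisting of the two routine verifications above followed by a direct application of the main theorem to $G=\mathcal{P}_{A}(^{n}E)$.
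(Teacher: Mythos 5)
Your proposal is correct and follows exactly the route the paper intends: the paper states this corollary without proof, treating it as an immediate consequence of Theorem \ref{thm:(Teorema 13)} applied with $F=\mathbb{K}$ and $G=\mathcal{P}_{A}(^{n}E)$, which is precisely your argument. Your two auxiliary verifications (that $\mathcal{P}_{f}(^{n}E)\subseteq\mathcal{P}_{w}(^{n}E)$ and that $\mathcal{P}_{w}(^{n}E)$ is norm-closed, so the closure $\mathcal{P}_{A}(^{n}E)$ lands inside it as a closed subspace) are both sound and simply make explicit what the paper leaves implicit.
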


\begin{corollary}\label{thm:(Teorema 100000000000)}
Let $E$ and $F$ be reflexive Banach spaces and $G$ be a closed linear subspace of $\mathcal{P}_{w}(^{n}E;F)$. If $\mathcal{P}(^{n}E;F)$ is isomorphic to $G$, then  $\mathcal{P}(^{n}E;F)$ is reflexive.
\end{corollary}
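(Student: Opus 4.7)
The plan is to apply Theorem~\ref{thm:(Teorema 13)} to $G$ and eliminate one of the two alternatives by producing a dual-space realization of $\mathcal{P}({}^{n}E;F)$. Since $G$ is by hypothesis isomorphic to $\mathcal{P}({}^{n}E;F)$, once we know that $\mathcal{P}({}^{n}E;F)$ is isomorphic to a dual Banach space, $G$ is also isomorphic to a dual space, so the theorem forces $G$ to be reflexive, and hence $\mathcal{P}({}^{n}E;F)$ is reflexive as well.

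The first main step is therefore to realize $\mathcal{P}({}^{n}E;F)$ as a dual space whenever $F$ is reflexive. I would use the standard identification
\[
\mathcal{P}({}^{n}E;F) \;\cong\; \mathcal{L}\bigl(\widehat{\otimes}_{\pi,s}^{\,n} E;\,F\bigr),
\]
coming from the universal property of the symmetric projective tensor product (each continuous $n$-homogeneous polynomial corresponds to a unique symmetric $n$-linear form, which in turn factors through $\widehat{\otimes}_{\pi,s}^{\,n} E$). For any Banach space $X$, the natural identification $\mathcal{L}(X;F'') \cong (X \,\widehat{\otimes}_\pi F')'$ holds via $T \mapsto \bigl((x,y')\mapsto T(x)(y')\bigr)$. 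Since $F$ is reflexive, $F \cong F''$, so setting $X=\widehat{\otimes}_{\pi,s}^{\,n} E$ yields
\[
\mathcal{P}({}^{n}E;F) \;\cong\; \bigl(\widehat{\otimes}_{\pi,s}^{\,n} E \,\widehat{\otimes}_\pi F'\bigr)',
\]
exhibiting $\mathcal{P}({}^{n}E;F)$ as the dual of a Banach space.

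With that in hand the proof concludes immediately: by hypothesis $G$ is isomorphic to $\mathcal{P}({}^{n}E;F)$, hence isomorphic to a dual space; Theorem~\ref{thm:(Teorema 13)} applied to $G$ then rules out the second alternative and forces $G$ to be reflexive, so $\mathcal{P}({}^{n}E;F)$ is reflexive. The only non-routine point is the dual-space representation above; it is a classical fact, but because the rest of the paper works directly with polynomials (rather than with symmetric tensor products), I would either cite it or briefly sketch the bilinear-form identification $\mathcal{L}(X;F'')\cong(X\,\widehat{\otimes}_\pi F')'$ to keep the paper self-contained.
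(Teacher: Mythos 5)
Your proposal is correct and follows essentially the same route as the paper: the paper's proof is the one-line observation that $\mathcal{P}(^{n}E;F)$ is a dual space, so $G$ is isomorphic to a dual space and Theorem~\ref{thm:(Teorema 13)} forces $G$ (hence $\mathcal{P}(^{n}E;F)$) to be reflexive. The only difference is that you spell out the standard identification $\mathcal{P}(^{n}E;F)\cong\bigl(\widehat{\otimes}_{\pi,s}^{\,n}E\,\widehat{\otimes}_{\pi}F'\bigr)'$, which the paper takes for granted.
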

\begin{proof}
Since $\mathcal{P}(^{n}E;F)$ is a dual space, then the conclusion follows from Theorem \ref{thm:(Teorema 13)}.
\end{proof}

The next proposition is a particular case of \cite[Proposition 5.3]{AR}.

\begin{proposition}\label{thm:(Teorema 345)}
Let $E$ and $F$ be Banach spaces. Then $\mathcal{P}_{w}(^{n}E;F)$ is isomorphic to a closed subspace
of $\mathcal{P}_{w}(^{m}E;F)$ for every $m\geq n$.
\end{proposition}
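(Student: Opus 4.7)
The plan is to construct an explicit linear embedding
$$T: \mathcal{P}_{w}(^{n}E; F) \to \mathcal{P}_{w}(^{m}E; F), \qquad T(P)(x) := \phi(x)^{m-n}P(x),$$
where by Hahn--Banach I fix $x_0 \in E$ with $\|x_0\|=1$ and $\phi \in E^{\prime}$ satisfying $\|\phi\| = \phi(x_0) = 1$ (the case $E = \{0\}$ being trivial). Routine checks show that $T(P)$ is $m$-homogeneous and weakly continuous on bounded sets---the latter because $\phi$ is a bounded linear functional (hence weakly continuous) and the product of a scalar function which is weakly continuous on bounded sets with an $F$-valued one of the same type is again of that type---and that $T$ is linear with $\|T\| \leq 1$.

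The heart of the proof is to show that $T$ is bounded below: $\|P\| \leq C_{n,m}\|T(P)\|$ for a constant depending only on $n,m$. My plan is to exploit the fact that on the affine subspace $x_0 + \ker\phi$ the cofactor $\phi(\cdot)^{m-n}$ is identically $1$, so $T(P)$ agrees with $P$ there. For any $y_0 \in \ker\phi$, the map
$$q(\lambda) := T(P)(\lambda y_0 + x_0) = P(\lambda y_0 + x_0) = \sum_{k=0}^{n} \binom{n}{k}\lambda^{k} \, \check{P}(\underbrace{x_0, \ldots, x_0}_{n-k}, \underbrace{y_0, \ldots, y_0}_{k})$$
is an $F$-valued polynomial in $\lambda \in \mathbb{K}$ of degree at most $n$, and its $n+1$ coefficients can be recovered from the values $q(0), q(1), \ldots, q(n)$ by Lagrange interpolation. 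Since $\|q(j)\| \leq \|T(P)\|\,\|jy_0+x_0\|^m$, whenever $\|y_0\| \leq 2$ I obtain uniform bounds
$$\bigl\|\check{P}(\underbrace{x_0, \ldots, x_0}_{n-k}, \underbrace{y_0, \ldots, y_0}_{k})\bigr\| \leq C_{n,m}\|T(P)\|, \qquad k = 0, 1, \ldots, n.$$

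To finish, for arbitrary $y \in B_E$ I decompose $y = \phi(y)x_0 + y_0$ with $y_0 := y - \phi(y)x_0 \in \ker\phi$ and $\|y_0\|\leq 2$; the binomial expansion
$$P(y) = \sum_{k=0}^{n} \binom{n}{k}\phi(y)^{n-k}\,\check{P}(\underbrace{x_0, \ldots, x_0}_{n-k}, \underbrace{y_0, \ldots, y_0}_{k}),$$
combined with $|\phi(y)|\leq 1$ and the estimates above, yields $\|P(y)\| \leq C'_{n,m}\|T(P)\|$, hence $\|P\|\leq C'_{n,m}\|T(P)\|$. Thus $T$ is an isomorphic embedding, and its range is automatically closed in $\mathcal{P}_{w}(^{m}E;F)$. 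I expect the main obstacle to be exactly this lower bound, since the naive attempt to recover $\|P(x)\|$ from $\|T(P)(x)\| = |\phi(x)|^{m-n}\|P(x)\|$ fails wherever $|\phi(x)|$ is small; the polarization/interpolation trick circumvents this by extracting the full multilinear content of $P$ from the values of $T(P)$ on a single affine subspace on which $\phi$ never vanishes.
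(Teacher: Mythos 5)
Your proof is correct, and it uses the same embedding as the paper: multiplication of $P$ by a power of a fixed norm-attaining functional (the paper iterates $\rho(Q)(x)=\varphi(x)Q(x)$ one degree at a time by induction, while you pass directly from degree $n$ to degree $m$ via $T(P)=\phi^{\,m-n}P$; the composite map is the same). The substantive difference is that you actually prove the map is an isomorphism onto a closed subspace, whereas the paper's proof only observes that $\rho$ is an injective bounded linear operator and then asserts the isomorphism, deferring in effect to \cite[Proposition 5.3]{AR}; injectivity alone gives neither a bounded inverse nor a closed range. Your interpolation argument on the affine hyperplane $x_0+\ker\phi$ --- where the cofactor $\phi(\cdot)^{m-n}$ is identically $1$, so the coefficients $\check{P}(x_0,\dots,x_0,y_0,\dots,y_0)$ are recovered from the values of $T(P)$ at $jy_0+x_0$, $j=0,\dots,n$, by inverting a fixed Vandermonde matrix, and then reassembled through the binomial expansion of $P(\phi(y)x_0+y_0)$ with $\|y_0\|\le 2$ --- supplies exactly the missing lower bound $\|P\|\le C_{n,m}\|T(P)\|$. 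All the individual steps (weak continuity of $T(P)$ on bounded sets, the degree-$n$ bound on $\lambda\mapsto T(P)(\lambda y_0+x_0)$, the norm estimates at the interpolation nodes) check out, so your write-up is a self-contained proof of the proposition rather than a reduction to the cited result.
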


\begin{proof}
To prove the proposition by induction on $n$ it suffices to prove that
 $\mathcal{P}_{w}(^{n}E; F)$ is isomorphic to a closed subspace of $\mathcal{P}(^{n+1}E; F)$.
 Choose $\varphi\in E^{\prime}$ such that $\varphi\neq 0$. Define $\rho: \mathcal{P}_{w}(^{n}E; F)\rightarrow \mathcal{P}_{w}(^{n+1}E; F)$ by
 $\rho(Q)(x)=\varphi(x)Q(x)$ for all $x\in E$. It is clear that $\rho$ is an injective linear operator. Therefore $\mathcal{P}_{w}(^{n}E; F)$ is isomorphic to $\rho(\mathcal{P}_{w}(^{n}E; F))\subset \mathcal{P}_{w}(^{n+1}E; F)$. This completes the proof.
\end{proof}

Finally we obtain the following results.

\begin{corollary}\label{thm:(Teorema 15)}
Let $E$ and $F$ be reflexive Banach spaces such that $E$ has the CAP. If $\mathcal{P}_{w}(^{n}E;F)\neq \mathcal{P}(^{n}E;F)$, then $\mathcal{P}_{w}(^{m}E;F)$ is not isomorphic to a dual space for every $m\geq n$.
\end{corollary}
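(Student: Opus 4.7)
The plan is to argue by contradiction. Suppose that for some $m\geq n$ the space $\mathcal{P}_{w}(^{m}E;F)$ is isomorphic to a dual Banach space. By Theorem \ref{thm:(Teorema 13)} it must then be reflexive, and since by Proposition \ref{thm:(Teorema 345)} $\mathcal{P}_{w}(^{n}E;F)$ embeds isomorphically as a closed subspace of $\mathcal{P}_{w}(^{m}E;F)$, the space $\mathcal{P}_{w}(^{n}E;F)$ is reflexive as well. My plan is then to derive a contradiction by showing that, under the CAP of $E$, reflexivity of $\mathcal{P}_{w}(^{n}E;F)$ forces the equality $\mathcal{P}_{w}(^{n}E;F)=\mathcal{P}(^{n}E;F)$, which is ruled out by hypothesis.

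To prove this equality, I would fix $P\in \mathcal{P}(^{n}E;F)$ and aim to realise $P$ as a weak limit of elements of $\mathcal{P}_{w}(^{n}E;F)$. The basic observation is that for every $T\in \mathcal{L}_{K}(E;E)$ the composition $P\circ T$ belongs to $\mathcal{P}_{w}(^{n}E;F)$: any bounded sequence $(x_k)$ with $x_k\to x$ weakly is sent by the compact operator $T$ to a norm-convergent sequence, after which $P(Tx_k)\to P(Tx)$ in norm by the norm-continuity of $P$. Applying CAP to each finite subset of $E$ yields a sequence $(T_k)\subset \mathcal{L}_{K}(E;E)$ with $T_k z\to z$ in norm for every $z\in E$; hence $y'(P(T_k x))\to y'(P(x))$ for every $x\in E$ and every $y'\in F'$.

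The step I expect to be the main obstacle is arranging the approximating sequence $(P\circ T_k)$ to be uniformly bounded in $\mathcal{P}_{w}(^{n}E;F)$, since the CAP as formulated imposes no a priori control on $\|T_k\|$. I would address this by passing to a bounded version of the approximation: localising to the separable reflexive subspace of $E$ generated by the countable data that enters the argument and applying a Banach--Steinhaus style reduction to deduce uniform norm control from the strong-operator convergence. This is the only nontrivial technical ingredient, and it is the place where the reflexivity of $E$ interacts with CAP in an essential way.

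Once a bounded sequence $(P\circ T_k)\subset \mathcal{P}_{w}(^{n}E;F)$ with $y'(P(T_k x))\to y'(P(x))$ for all $x\in E$, $y'\in F'$ is in hand, reflexivity of $\mathcal{P}_{w}(^{n}E;F)$ extracts a weakly convergent subsequence $P\circ T_{k_j}\to Q$ in $\mathcal{P}_{w}(^{n}E;F)$. Lemma \ref{thm:(Teorema 1)} identifies this weak limit with the pointwise one, whence $y'(Q(x))=\lim_j y'(P(T_{k_j}x))=y'(P(x))$ for every $x$ and every $y'$. Therefore $Q=P$, so $P\in \mathcal{P}_{w}(^{n}E;F)$, contradicting $\mathcal{P}_{w}(^{n}E;F)\neq \mathcal{P}(^{n}E;F)$ and completing the proof.
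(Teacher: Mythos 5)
Your reduction is exactly the paper's: invoke Theorem \ref{thm:(Teorema 13)} to convert ``isomorphic to a dual'' into ``reflexive,'' use Proposition \ref{thm:(Teorema 345)} to pass reflexivity from $\mathcal{P}_{w}(^{m}E;F)$ down to $\mathcal{P}_{w}(^{n}E;F)$, and then contradict the hypothesis by showing that reflexivity of $\mathcal{P}_{w}(^{n}E;F)$ together with the CAP of $E$ forces $\mathcal{P}_{w}(^{n}E;F)=\mathcal{P}(^{n}E;F)$. The difference is that the paper obtains this last implication by citing \cite[Corollary 4.4]{JI}, whereas you attempt to prove it from scratch, and your attempt has a genuine gap at precisely the point you flag as ``the main obstacle.''

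Concretely: the CAP, as defined in the paper, produces for each compact set $C$ and each $\epsilon>0$ a compact operator $T$ with $\|Tx-x\|<\epsilon$ on $C$, with \emph{no control whatsoever on} $\|T\|$. Running over finite (or compact) sets therefore yields only a net $(T_{\alpha})$ converging to the identity uniformly on compact sets, not a sequence with $T_{k}z\to z$ for \emph{every} $z\in E$; and your proposed Banach--Steinhaus repair cannot close this circle. The uniform boundedness principle requires pointwise boundedness of $(T_{k})$ on all of $E$ (a nonmeager set), while the construction only gives you convergence, hence boundedness, on the countable set of points you fed into the CAP; pointwise boundedness on a dense or countable subset does not imply uniform boundedness, and without $\sup_{k}\|T_{k}\|<\infty$ you get neither $T_{k}z\to z$ for all $z$ nor the bound $\|P\circ T_{k}\|\leq\|P\|\,\|T_{k}\|^{n}$ that you need to apply reflexivity and extract a weak cluster point. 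What you actually need is a \emph{bounded} compact approximating net, i.e.\ the bounded CAP, and whether the CAP implies the bounded (or metric) CAP for reflexive spaces is a well-known delicate question --- the analogue of Grothendieck's theorem (AP implies MAP for reflexive spaces) is not available for the compact approximation property by the same duality trick, since $\mathcal{L}_{K}(E;E)$ lacks the trace-duality description that finite-rank operators enjoy. So the step you dismiss as ``the only nontrivial technical ingredient'' is in fact the whole content of the cited result of Bu--Ji--Wong, and the sketch you give does not supply it. The remedy is simply to quote \cite[Corollary 4.4]{JI} as the paper does, or to give a genuinely different argument for the implication ``$\mathcal{P}_{w}(^{n}E;F)$ reflexive $\Rightarrow$ $\mathcal{P}_{w}(^{n}E;F)=\mathcal{P}(^{n}E;F)$'' that does not pass through a bounded approximating net.
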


\begin{proof}
By Theorem \ref{thm:(Teorema 13)} we only need to prove that $\mathcal{P}_{w}(^{m}E;F)$ is not reflexive for every $m\geq n$. By Proposition \ref{thm:(Teorema 345)}  we have that $\mathcal{P}_{w} (^{n}E;F)$ is isomorphic to a closed subspace of $\mathcal{P}_{w} (^{m}E;F)$ for every $m\geq n$.  If we assume that $\mathcal{P}_{w} (^{m}E;F)$ is reflexive for some $m\geq n$, then $\mathcal{P}_{w} (^{n}E;F)$ is also reflexive. By \cite[Corollary 4.4]{JI} we have that $\mathcal{P}_{w} (^{n}E;F)= \mathcal{P} (^{n}E;F)$, but this contradicts the hypothesis.
\end{proof}

\begin{corollary}\label{thm:(Teorema 16)}
Let $E$ be a reflexive infinite dimensional Banach space with the CAP. Then $\mathcal{P}_{w}(^{n}E;E)$ is non-isomorphic to a dual space for every $n\in \mathbb{N}$.
\end{corollary}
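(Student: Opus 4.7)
The plan is to reduce this corollary to Corollary \ref{thm:(Teorema 15)} applied with $n=1$ and $F=E$. All of the hypotheses of Corollary \ref{thm:(Teorema 15)} are already in hand ($E$ reflexive with the CAP, $F=E$ reflexive), so what remains is to exhibit a single value of $n$ for which $\mathcal{P}_{w}(^{n}E;E)\neq \mathcal{P}(^{n}E;E)$, since the corollary then yields the non-isomorphism to a dual space for all larger $m$.

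The natural choice is $n=1$. Under the identifications recalled in the introduction, $\mathcal{P}_{w}(^{1}E;E)=\mathcal{L}_{K}(E;E)$ and $\mathcal{P}(^{1}E;E)=\mathcal{L}(E;E)$. Since $E$ is infinite dimensional, the identity operator $\mathrm{Id}_{E}$ lies in $\mathcal{L}(E;E)$ but does not send the unit ball $B_{E}$ to a relatively compact set, so $\mathrm{Id}_{E}\notin \mathcal{L}_{K}(E;E)$. Hence $\mathcal{P}_{w}(^{1}E;E)\subsetneq \mathcal{P}(^{1}E;E)$.

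Applying Corollary \ref{thm:(Teorema 15)} with $n=1$ and $F=E$, we conclude that $\mathcal{P}_{w}(^{m}E;E)$ is not isomorphic to a dual space for every $m\geq 1$, which is precisely the statement of the corollary.

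There is no real obstacle here: the content is packaged in Corollary \ref{thm:(Teorema 15)}, and the only thing to verify is the strict inclusion $\mathcal{P}_{w}(^{1}E;E)\subsetneq \mathcal{P}(^{1}E;E)$, which follows immediately from the infinite dimensionality of $E$ via the non-compactness of the identity.
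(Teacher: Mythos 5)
Your proposal is correct and follows exactly the paper's argument: the paper likewise observes via the Riesz theorem that $\mathcal{L}_{K}(E;E)\neq \mathcal{L}(E;E)$ (i.e., $\mathcal{P}_{w}(^{1}E;E)\subsetneq \mathcal{P}(^{1}E;E)$ since the identity is not compact in infinite dimensions) and then invokes Corollary \ref{thm:(Teorema 15)} with $n=1$ and $F=E$. Your write-up merely makes the identification $\mathcal{P}_{w}(^{1}E;E)=\mathcal{L}_{K}(E;E)$ explicit, which is a harmless elaboration of the same proof.
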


\begin{proof}
 By the Riesz Theorem  $\mathcal{L}_{K}(E;E)\neq \mathcal{L}(E;E)$. Now the result follows from Corollary \ref{thm:(Teorema 15)}.
\end{proof}

This paper is based on part of the author's doctoral thesis at the Universidade Estadual de Campinas. This research has been supported by CAPES and CNPq.
The author is grateful to his thesis advisor, Professor Jorge Mujica, for his advice and help. He is also grateful to Professor Vinícius
Fávaro for some helpful suggestions.


\begin{thebibliography}{99}                                                                                               %

%\bibitem{PELIN}
%\newblock D. Amir and J. Lindenstrauss, {\it The structure of weakly compact sets in Banach spaces,} Ann. of Math. 88 (1968), 35–46.

\bibitem{AR}
\newblock R. M. Aron, M. Schottenloher,  {\it Compact holomorphic mappings on Banach spaces and
the approximation property .} J. Funct. Anal. 21 (1976), 7–30.

%\bibitem{PELBE}
%\newblock C. Bessaga and A. Pelczynski, {\it On bases and unconditional convergence of series in
%Banach spaces,} Studia Math. 17 (1958), 151 - 174 .

%\bibitem{PELBES}
%\newblock  C. Bessaga and A. Pelczynski, {\it Some remarks on conjugate spaces containing a subspace
%isomorphic to the space $c_{0}$,} Bull. Acad. Poton. Sci. 6 (1958), 249-250 .

%\bibitem{H}
%\newblock H. Brezis,  {\it Functional Analysis, Sobolev Spaces and Partial Differential Equations. } Springer (2010).

\bibitem{BOYD}
\newblock C. Boyd and R. A. Ryan, {\it Geometric  theory  of  spaces  of  integral  polynomials  and  symmetric  tensor
products,} J. Funct. Anal. 179 (2001), 18–42.


\bibitem{JI}
\newblock Q. Bu, D. Ji, N. Wong, {\it Weak sequential completeness of spaces of homogeneous polynomials}, J. Math. Anal. Appl. 427 (2015), 1119-1130.

%\bibitem{SEAN}
%\newblock S. Dineen,  {\it Complex Analysis on Infinite Dimensional Spaces, } Springer, London 1999.

\bibitem{FED}
\newblock M. Feder,  {\it On subspaces of spaces with an unconditional basis and spaces of operators, } Illinois J. Math. 24 (1980), 196-205.

\bibitem{FEDER}
\newblock M. Feder and P. Saphar, {\it Spaces of compact operators and their dual spaces,} Israel J. Math. 21 (1975), 38-49.


\bibitem{Gon}
\newblock M. González, J.M. Gutiérrez, {\it Weak compactness in spaces of differentiable mappings, } Rocky Mountain J. Math. 25 (1995), 619–634.

%\bibitem{GONZALEZ M}
%\newblock G. González, Manuel; Gutiérrez, Joaquín M. {\it The polynomial property (V),}Arch. Math. (Basel) 75 (2000), no. 4, 299–306


%\bibitem{JAMES}
%\newblock R. James, {\it Characterizations of reflexivity,} Studia Math., 23 (1964) 205- 216.

%\bibitem{KALTON}
%\newblock N J. Kalton,  {\it Spaces of compact operators,}Math. Ann. 208(1974), 267-278.


%\bibitem{RYAN}
%\newblock R. Ryan, {\it Applications of topological tensor products to infinite dimensional
%holomorphy}, Ph. D. Thesis, Trinity College, Dublin,  1980.

%\bibitem{CILIA}
%\newblock R. Cilia and G. Emmanuele, {\it Pelczynski’s property (V) and weak* basic
%sequences,} Quaestiones Mathematicae. 38 (2015), 307-316.

%\bibitem{JAMES}
%\newblock R. C. James, {\it Bases and reflexivity of Banach spaces,} Ann. of Math. 52 (1950), 518-527.
%\bibitem{KALTON}
%\newblock N J. Kalton,  {\it Spaces of compact operators,}Math. Ann. 208(1974), 267-278.

%\bibitem{LEWIS}
%\newblock P. Lewis, {\it The uncomplemented spaces $W(X,Y)$ and $K(X,Y)$,} Can. Math. Bull. 53, No. 1, 118-121 (2010).
\end{thebibliography}
\end{document}